\newtheorem{theorem}{Theorem}[section]
\newtheorem{lemma}[theorem]{Lemma}
\newtheorem{corollary}[theorem]{Corollary}
\newtheorem{remark}[theorem]{Remark}
\newtheorem{example}[theorem]{Example}
\theoremstyle{definition}
\theoremstyle{proof}
\theoremstyle{observation}
\begin{document}
  \setcounter{Maxaffil}{2}
   \title{A short note on cospectral and integral chain graphs for Seidel matrix}
   \author[ ]{Santanu Mandal\thanks{santanu.vumath@gmail.com}}
   \affil[ ]{Department of Mathematics,}
  \affil[ ]{National Institute of Technology,}
   \affil[ ]{Rourkela - 769008, India}

   \maketitle
\begin{abstract}
In this brief communication, we investigate the cospectral as well integral chain graphs for Seidel matrix, a key component to study the structural properties of equiangular lines in space. We derive a formula that allows to generate an infinite number of inequivalent chain graphs with identical spectrum. In addition, we obtain a family of Seidel integral chain graphs. This contrapositively answers a problem posed by Greaves ["Equiangular line systems and switching classes containing regular graphs", Linear Algebra Appl., (2018)] ("Does every Seidel matrix with precisely three distinct rational eigenvalues contain a regular graph in its switching class?"). Our observation is- "no".
\end{abstract}
\textbf{AMS Classification: } 05C50.\\
\textbf{Keywords: } Chain graph, Seidel matrix, equiangular lines, switching, cospectral, integral.
\section{Introduction}
Suppose $K_n, C_n$ denote the complete graph and cycle on $n$ vertices respectively. A graph with no induced subgraphs isomorphic to $C_3, C_5$ and $2K_2$ is called a chain graph. Chain graph has several equivalent definitions. In this study, we consider the definition based on binary string, that will be relevant to this work. The underlying matrix is the Seidel matrix $S$.\\
An $n$ vertex chain graph can be generated by the binary string $b=0^{s_1} 1^{t_1} 0^{s_2} \ldots 0^{s_k} 1^{t_k}$, where $s_i,t_i\geq1$ and $\sum s_i+\sum t_i=n$ (see \cite{MMD}). Note that a chain graph is a bipartite graph, and it is complete bipartite if and only if $k=1$.\\
Let $G=(V,E)$ be a simple, connected and finite graph with vertex set $V=\{v_1,v_2,\ldots,v_n\}$. Let $A$ be the $(0,1)$-adjacency  matrix of $G$. Then the Seidel matrix (studied in \cite{Berman, MMD}) is a square matrix of order $n$ defined by $$S=J-I-2A,$$
where $J$ is all $1$ matrix and $I$ is the identity matrix. In other words, if $s_{ij}$ is the $(i,j)$-th entry of $S$, then
$$s_{ij}=\begin{cases}
-1&\text{if }v_i\sim v_j,\\
1&\text{if }v_i\nsim v_j,\ i\neq j,\\
0&\text{if }i=j.
\end{cases}
$$
\subsection{Equiangular line}
A system of lines $\mathcal{L}$ in the Euclidean $d-$dimensional space $\mathbb{R}^d$ is said to be \textbf{equiangular} if all the lines meet at a point and have pairwise equal angle. The constant angle is called the common angle of the system. There is a closed correspondence between the equiangular line and $(-1,0,1)-$Seidel matrix. Seidel and Lint \cite{LS} first notice this important relationship around $1966$. Later Greaves et al. \cite{Greaves,GKMS,GSY} develope several significant results in the theory of equiangular lines in connection with Seidel matrix. In fact, Seidel matrix plays a key role to study the structure of equiangular lines in space geometry. Seidel matrix can be viewed as a Gram matrix corresponding to an equiangular line system of $n>d$ lines in $\mathbb{R}^d$, where the pairwise inner product between any two distinct lines is $\pm\frac{1}{\lambda_{min}}$ (see \cite{LS}), where $\lambda_{min}$ is the smallest eigenvalue of $S$. Suppose $S$ is the Seidel matrix of a graph with $n$ vertices and $\lambda_{min}$ is the least eigenvalue with multiplicity $(n-d)$ for some natural number $d$. Then $\mathcal{L}$  contains at most $n$ equiangular lines in the Euclidean space $\mathbb{R}^d$. The common angle is $\frac{1}{\lambda_{min}}$.  The study of Seidel matrices is motivated by this correspondence, which also emphasises the significance of their smallest eigenvalue. We believe that this work can be of interest to the readers for its tight connection with equiangular lines in the Euclidean spaces (see \cite{Greaves, GKMS, GSY, LS, SO} for more details).


\subsection{Switching}
Suppose $G=(V,E)$ is a graph and $V^{'}\subseteq V$. Then the switching graph of $G$ on $V^{'}$ is the graph $G^{'}$ arises from $G$ by changing all edges between $V^{'}$ and $V\setminus V^{'}$ to non-edges, and all the non-edges to edges as well. This operation is called the switching on the subset $V^{'}$. The adjacency property among the vertices in $V$ (or in $V^{'}$) is preserved when switching is applied. That means we leave the edges and non-edges within $V^{'}$ (or $V\setminus V^{'}$) unaltered. The collection of all graphs that can be obtained from $G$ by switching on every possible subset of $V$ is called the switching class of $G$. Two graphs $G$ and $H$ are called the switching equivalent (or simply equivalent) if they belong to the same switching class. Otherwise, we call them simply inequivalent. If two graphs are equivalent (res. inequivalent) then their Seidel matrices are said to be equivalent (res. inequivalent). Equivalent Seidel matrices are cospectral, but starting from $n=8$ examples of inequivalent cospectral Seidel matrices (of order $n$) appear. That means there does not exist any cospectral graphs (which are not switching equivalent) of order $n\leq7$. Sz\"oll\"osi et al. \cite{SO} tabulated the number of inequivalent cospectral Seidel matrices  of order $n$ where $n\leq13$. However, here we construct an infinite family of inequivalent cospectral chain graphs of order $n$ where $n\geq14$. Greaves \cite{Greaves} investigated the case- when a Seidel matrix having precisely three distinct eigenvalues has a regular graph in its switching class. He noted that, excluding one (this contains irrational eigenvalues), every Seidel matrix of order at most $12$ with precisely three distinct eigenvalues has a regular graph in its switching class. And, motivated from this observation, he left a question:\\
\textbf{Question 1.}\cite{Greaves} Does every Seidel matrix with precisely three distinct rational eigenvalues contain a regular graph in its switching class?\\ We are able to answer this question contrapositively by finding one counter example (see Corollary \ref{cor4}). Sz\"oll\"osi and \"Osterg\'ard \cite{SO} pointed out an infinite family of Seidel matrices (irrespective of the nature of the eigenvalues) that do not have a regular graph in their switching classes. Recall that a $\{P_4,C_4,2K_2\}-$ free graph is called a threshold graph. Mandal and Mehatari \cite{MM} proved that - no threshold graphs can have three distinct Seidel eigenvalues.  In light of this conclusion, a straighforward theorem is given below.
\begin{theorem}
No threshold graphs with three distinct Seidel eigenvalues contain a regular graph in its switching class.
\end{theorem}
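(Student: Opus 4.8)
The plan is to observe that the statement is \emph{vacuously} true once we invoke the result of Mandal and Mehatari \cite{MM} recalled just above. First I would record the precise logical structure: the theorem quantifies over the class $\mathcal{T}_3$ of threshold graphs possessing exactly three distinct Seidel eigenvalues, and asserts a property of every member of $\mathcal{T}_3$. By \cite{MM}, $\mathcal{T}_3=\emptyset$. Hence there is nothing to verify, and the proof reduces to a single sentence citing \cite{MM}: since the hypothesis is never met, the conclusion holds automatically.

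Second, to make the theorem informative rather than merely formal, I would append a short remark explaining why the spectral hypothesis is indispensable. The complete graph $K_n$ is a threshold graph, it is $(n-1)$-regular, and it lies in its own switching class; so the weaker assertion ``no threshold graph has a regular graph in its switching class'' is plainly false. Consistently, the Seidel matrix of $K_n$ equals $I-J$, whose only eigenvalues are $1$ and $1-n$ -- two distinct values, not three. This pins down the real content: it is carried entirely by the restriction ``precisely three distinct Seidel eigenvalues'', which is exactly what \cite{MM} forbids for threshold graphs, and it is the point of contact with Greaves' Question~1 \cite{Greaves}.

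There is essentially no obstacle in the argument itself; the only care needed is in parsing the quantifier correctly. The genuine mathematical work -- that a threshold graph can never have exactly three distinct Seidel eigenvalues -- is external to this note and is borrowed from \cite{MM}; were one to demand a self-contained proof, reproving that classification (through the nested-neighbourhood / binary-string description of threshold graphs together with an analysis of the rank and the sign pattern of $S=J-I-2A$) would be the hard part. Within the present note, however, the theorem is correctly billed as ``straightforward'': it is the vacuous specialization of \cite{MM} aimed at Question~1, and it sets the stage for the genuinely non-vacuous counterexamples to that question produced later from chain graphs.
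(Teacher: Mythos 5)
Your proposal is correct and matches the paper's intent exactly: the paper offers no written proof precisely because, as you observe, the result of Mandal and Mehatari (\cite{MM}) makes the class of threshold graphs with three distinct Seidel eigenvalues empty, so the theorem holds vacuously. Your additional remark about $K_n$ is a nice clarification but not part of the paper's argument.
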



\subsection{Quotient Matrix}
Let us consider a chain graph $G$ with the binary string $b=0^{s_1} 1^{t_1} 0^{s_2} \ldots 0^{s_k} 1^{t_k}$. Then the  Seidel matrix $S$ of $G$ is a square matrix of size $n$, given by

$$S=\begin{bmatrix}
(J-I)_{s_1}&-J_{s_1\times t_1}&J_{s_1\times s_2}&-J_{s_1\times t_2}&J_{s_1\times s_3}&-J_{s_1\times t_3}&\ldots&-J_{s_1\times t_k}\\[1mm]
-J_{t_1\times s_1}&(J-I)_{t_1}&J_{t_1\times s_2}&J_{t_1\times t_2}&J_{t_1\times s_3}&J_{t_1\times t_3}&\ldots&J_{t_1\times t_k}\\[1mm]
J_{s_2\times s_1}&J_{s_2\times t_1}&(J-I)_{s_2}&-J_{s_2\times t_2}&J_{s_2\times s_3}&-J_{s_2\times t_3}&\ldots&-J_{s_2\times t_k}\\[1mm]
-J_{t_2\times s_1}&J_{t_2\times t_1}&-J_{t_2\times s_2}&(J-I)_{t_2}&J_{t_2\times s_3}&J_{t_2\times t_3}&\ldots&J_{t_2\times t_k}\\[1mm]
& & & & & & \ddots \\[1mm]
J_{s_k\times s_1}&J_{s_k\times t_1}&J_{s_k\times s_2}&J_{s_k\times t_2}&J_{s_k\times s_3}&J_{s_k\times t_3}&\ldots &-J_{s_k\times t_k}\\[1mm]
-J_{t_k\times s_1}&J_{t_k\times t_1}&-J_{t_k\times s_2}&J_{t_k\times t_2}&-J_{t_k\times s_3}&J_{t_k\times t_3}&\ldots&(J-I)_{t_k}
\end{bmatrix},$$
where  $J_{m\times n}$ is all 1 block matrix of size $m\times n$ and the diagonal blocks of $S$ are the square matrices of size $s_1\times s_1,~t_1\times t_1,~s_2\times s_2,~t_2\times t_2,~\ldots,~t_k\times t_k$. \\
We now construct an equitable partition in the following way. For the representation $b=0^{s_1} 1^{t_1} 0^{s_2} \ldots 0^{s_k} 1^{t_k}$ of $G$, let $V_{s_1}$ denote the set of vertices representing first $s_1$ added vertices, $V_{t_1}$ denote the set of vertices representing next $t_1$ added vertices, $V_{s_2}$ denote the set of vertices representing next $s_2$ added vertices and finally $V_{t_k}$ denote the set of vertices representing last $t_k$ added vertices. Now we consider the vertex partition of $\pi =\{ C_1,C_2,C_3,\ldots,C_{2k}\}$, where   $C_i=V_{s_j}=s_j$, if $i=2j-1$ $(j=1,\,2,\ldots,\,k)$, \text{and} $C_i=V_{t_j}=t_j$, if $i=2j$ $(j=1,\,2,\ldots,\,k)$. Then $\pi$ is an equitable partition of $G$ of size $2k$. Throughout this paper we consider this equitable partition only, and by writing ``$G$ is a chain graph of order $n$ with $|\pi|=2k$'' we mean that $G$ is a chain graph of order $n$ with the binary string $b=0^{s_1} 1^{t_1} 0^{s_2} \ldots 0^{s_k} 1^{t_k}$. Let $Q_\pi$ be the corresponding quotient matrix. Then  $Q_S$ is a square matrix of size $2k$, given by
$$Q_\pi=\begin{bmatrix}
(s_1 -1) & -t_1 &s_2  &-t_2&s_3  &-t_3& \ldots & -t_k \\[1mm]
-s_1&(t_1 -1) & s_2 &t_2&s_3&t_3& \ldots & t_k\\[1mm]
s_1 &t_1&(s_2-1)&-t_2 &s_3 &-t_3& \ldots & -t_k\\[1mm]
-s_1&t_1&-s_2&(t_2 -1)&s_3&t_3&\ldots&t_k\\[1mm]
s_1&t_1&s_2&t_2&(s_3-1)&-t_3&\ldots&-t_k\\[1mm]
-s_1&t_1&-s_2&t_2&-s_3&(t_3-1)&\ldots& t_k\\[1mm]
& & & & & & \ddots \\[1mm]
s_1&t_1&s_2&t_2&s_3&t_3&\ldots&-t_k \\[1mm]
-s_1 &  t_1 &-s_2 &t_2 &-s_3 &t_3& \ldots & (t_k -1)
\end{bmatrix}.$$
The following results are helpful in this study.
\begin{theorem}\cite{MMD}
\label{Chain_pm1_Ch1} Let $G$ be a chain graph of order $n$ with $|\pi|=2k$. Then
$$n_{-1}(S)= n-2k+1,$$
where $n_{-1}(S)$ is the multiplicity of the eigenvalue $-1$ of $S$.
\end{theorem}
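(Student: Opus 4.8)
\noindent\emph{Proof proposal.} The plan is to compute $\dim\ker(S+I)$ directly (recall $S$ is symmetric, so $n_{-1}(S)=n-\operatorname{rank}(S+I)$), exploiting that $S+I$ is blockwise constant along the cells of the equitable partition $\pi$. Let $M$ be the $n\times 2k$ matrix whose $j$-th column is the indicator vector of the cell $C_j$; since all $s_i,t_i\ge 1$, every cell is nonempty and $M$ has full column rank $2k$. From the displayed block form of $S$, every block $C_i\times C_j$ of $S+I$ is a constant $\pm1$ matrix (the diagonal blocks $J-I$ become $J$), so $S+I=MCM^{T}$, where $C=(c_{ij})$ is the symmetric $2k\times2k$ sign matrix of those constants: $c_{ij}=+1$ on every $s$--$s$ and every $t$--$t$ block, $c_{ii}=1$, and on the $s$--$t$ blocks $c_{ij}=\pm1$ according to the alternating pattern read off from $S$. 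Since $M$ is injective we have $\operatorname{rank}(MCM^{T})=\operatorname{rank}(C)$, whence $n_{-1}(S)=n-\operatorname{rank}(C)$, and the theorem reduces to the single claim $\operatorname{rank}(C)=2k-1$.

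For the bound $\operatorname{rank}(C)\le 2k-1$ it is enough to produce one linear dependence among the rows of $C$: writing $R_1,\dots,R_{2k}$ for the rows, the first (cell $V_{s_1}$) is $R_1=(1,-1,1,-1,\dots,1,-1)$ and the last (cell $V_{t_k}$) is $R_{2k}=(-1,1,-1,1,\dots,-1,1)=-R_1$. For the reverse inequality I would run elementary row operations on the $R_i$: inspecting the block pattern of $S$ shows that for each $1\le j\le k-1$ the two consecutive ``$s$-type'' rows differ by $R_{2j-1}-R_{2j+1}=-2e_{2j}$ and the two consecutive ``$t$-type'' rows differ by $R_{2j}-R_{2j+2}=2e_{2j+1}$, where $e_1,\dots,e_{2k}$ is the standard basis. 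These combinations produce all of $e_2,e_3,\dots,e_{2k-1}$ in the row space of $C$, and adjoining $R_1$, which is congruent to $e_1-e_{2k}\ne 0$ modulo $\langle e_2,\dots,e_{2k-1}\rangle$, gives $2k-1$ independent rows. Hence $\operatorname{rank}(C)=2k-1$ and $n_{-1}(S)=n-(2k-1)=n-2k+1$.

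I expect the only real content to be this rank computation for the structured matrix $C$---equivalently, that $-1$ is a simple eigenvalue of the quotient matrix $Q_\pi$, since $Q_\pi+I=C\,\operatorname{diag}(s_1,t_1,\dots,s_k,t_k)$ with the diagonal factor invertible, so $\operatorname{rank}(Q_\pi+I)=\operatorname{rank}(C)$. An equivalent, more spectral route to the easy inequality $n_{-1}(S)\ge n-2k$ is to note that any vector supported on a single cell with zero cell-sum is a $(-1)$-eigenvector of $S$ (the diagonal block of $S$ is $J-I$ and the off-diagonal blocks are $\pm J$), giving an $(n-2k)$-dimensional $(-1)$-eigenspace; one then checks that $Q_\pi$ supplies exactly one further copy of $-1$, which is again the rank computation above.
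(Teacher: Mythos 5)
\emph{Referee comment on the proposal for Theorem \ref{Chain_pm1_Ch1}.}
The paper does not prove this statement; it is quoted from \cite{MMD}, so there is no in-text argument to compare against. Your proof is correct and self-contained. The key identity $S+I=MCM^{T}$ with $M$ the characteristic matrix of $\pi$ is right: each cell is independent, so the diagonal blocks $J-I$ of $S$ become $J$, and the off-diagonal blocks are $\pm J$ by construction, whence $\operatorname{rank}(S+I)=\operatorname{rank}(C)$ because $M$ has full column rank (all $s_i,t_i\ge 1$). I checked the sign pattern of $C$ against the displayed block form of $S$: with cells ordered $s_1,t_1,\dots,s_k,t_k$, one has $c_{ij}=+1$ on all $s$--$s$ and $t$--$t$ positions and $c_{(s_i),(t_j)}=-1$ exactly when $j\ge i$. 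This gives $R_1=(1,-1,\dots,1,-1)=-R_{2k}$, and the differences $R_{2j-1}-R_{2j+1}=-2e_{2j}$, $R_{2j}-R_{2j+2}=2e_{2j+1}$ for $1\le j\le k-1$ as you claim, so the row space contains $e_2,\dots,e_{2k-1}$ together with $R_1\equiv e_1-e_{2k}$, yielding $\operatorname{rank}(C)=2k-1$ (the degenerate case $k=1$ also checks out). Your closing remarks are likewise consistent: $Q_\pi+I=C\,\mathrm{diag}(s_1,t_1,\dots,t_k)$ matches the displayed $Q_\pi$, so the result is equivalent to $-1$ being a simple eigenvalue of the quotient matrix, and the ``spectral'' variant (an $(n-2k)$-dimensional $(-1)$-eigenspace of cellwise zero-sum vectors plus one copy from $Q_\pi$) is the standard equitable-partition phrasing of the same computation. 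No gaps.
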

\begin{theorem}\cite{MMD}
\label{Chain_quotient_th2}
An eigenvalue of  $Q_\pi$ can appear at most twice.
\end{theorem}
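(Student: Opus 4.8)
The plan is to work directly with the eigenvalue equation $Q_\pi v = xv$ and to exhibit, for each $x$, a two-dimensional subspace of $\mathbb{R}^{2k}$ containing the entire eigenspace; this bounds the geometric multiplicity of every eigenvalue by $2$. To upgrade this to algebraic multiplicity I first observe that $Q_\pi$ is diagonalizable. If $B$ is the $n\times 2k$ characteristic matrix of the partition $\pi$ and $D=\operatorname{diag}(s_1,t_1,s_2,t_2,\dots,s_k,t_k)$, then counting block sums gives $B^{\top}SB=D\,Q_\pi$, whence $D^{1/2}Q_\pi D^{-1/2}=(BD^{-1/2})^{\top}S\,(BD^{-1/2})$ is real symmetric; so $Q_\pi$ is similar to a symmetric matrix and its algebraic and geometric multiplicities coincide. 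It therefore suffices to show $\dim\ker(Q_\pi-xI)\le 2$ for every $x$.

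Write $v=(a_1,b_1,a_2,b_2,\dots,a_k,b_k)\in\mathbb{R}^{2k}$, with $a_j$ the coordinate indexed by the cell $V_{s_j}$ and $b_j$ the one indexed by $V_{t_j}$. Inspecting the block sign pattern in the displayed form of $Q_\pi$, one checks that for $1\le j\le k-1$ the rows of $Q_\pi$ satisfy
\[
(\text{row }2j+1)-(\text{row }2j-1)=e_{2j-1}+2t_j\,e_{2j}-e_{2j+1},\qquad (\text{row }2j+2)-(\text{row }2j)=e_{2j}-2s_{j+1}\,e_{2j+1}-e_{2j+2}.
\]
Evaluating these at an eigenvector $v$, using $(Q_\pi v)_i=xv_i$, and setting $y:=x+1$, I get the recursion
\[
y\,a_{j+1}=y\,a_j+2t_j\,b_j,\qquad y\,b_{j+1}=y\,b_j-2s_{j+1}\,a_{j+1}\qquad(1\le j\le k-1).
\]

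Then I would split into cases. If $y\ne 0$, the first relation determines $a_{j+1}$ from $a_j,b_j$ and the second determines $b_{j+1}$ from $b_j,a_{j+1}$, so $v$ is completely prescribed by the pair $(a_1,b_1)=(v_1,v_2)$. The linear map $(a_1,b_1)\mapsto v$ is injective, and its image contains $\ker(Q_\pi-xI)$ because every eigenvector satisfies the recursion; hence $\dim\ker(Q_\pi-xI)\le 2$. If $y=0$, i.e. $x=-1$, the recursion collapses to $2t_j b_j=0$ and $2s_{j+1}a_{j+1}=0$, and since all $s_i,t_i\ge 1$ this forces $b_1=\dots=b_{k-1}=0$ and $a_2=\dots=a_k=0$; thus $\ker(Q_\pi+I)$ lies in the span of $e_1$ and $e_{2k}$ and has dimension at most $2$. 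Combining the cases gives the theorem.

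The main obstacle is purely computational: correctly tracking signs when subtracting consecutive rows of $Q_\pi$, since the $s$-indexed rows and the $t$-indexed rows carry opposite sign conventions in the odd (respectively even) columns, and one must pinpoint the single column whose entry changes sign each time the row index advances by two. Once that identity is in hand, the recursion and the multiplicity bound follow immediately.
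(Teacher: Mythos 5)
The paper does not actually prove this statement --- it is imported from \cite{MMD} as a cited result --- so there is no internal proof to compare yours against; what matters is whether your argument stands on its own, and it does. I checked your two row identities against the displayed $Q_\pi$ (using that the cell $V_{s_i}$ is joined to $V_{t_j}$ exactly when $j\ge i$): row $2j+1$ minus row $2j-1$ is indeed $e_{2j-1}+2t_j e_{2j}-e_{2j+1}$ and row $2j+2$ minus row $2j$ is $e_{2j}-2s_{j+1}e_{2j+1}-e_{2j+2}$, so with $y=x+1$ the recursion $y\,a_{j+1}=y\,a_j+2t_jb_j$, $y\,b_{j+1}=y\,b_j-2s_{j+1}a_{j+1}$ is correct. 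For $y\neq 0$ this pins $v$ down from $(a_1,b_1)$, and for $y=0$ it forces $b_1=\dots=b_{k-1}=0$ and $a_2=\dots=a_k=0$ (since $s_i,t_i\ge 1$), so in both cases the eigenspace sits inside a $2$-dimensional subspace; the $k=1$ case is vacuous. The preliminary reduction from algebraic to geometric multiplicity is also sound: $SB=BQ_\pi$ for the equitable partition gives $B^{\top}SB=B^{\top}BQ_\pi=DQ_\pi$, which is symmetric, hence $D^{1/2}Q_\pi D^{-1/2}$ is symmetric and $Q_\pi$ is diagonalizable. In effect you show $\operatorname{rank}(Q_\pi-xI)\ge 2k-2$ for every $x$, which is a clean, elementary route to the bound; the only caution is the one you already flagged, namely that the sign bookkeeping in the row differences must be done against the full adjacency rule for the chain graph and not just the top-left corner of the displayed matrix.
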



\section{Cospectral chain graphs}
Construction of cospectral graphs with respect to Seidel matrix is more difficult than any other graph matrices (like adjacency, Laplacian, distance etc). Because for the most part, the idea of switching-equivalent prevents the occurrence of such graphs in the Seidel matrix scenario. The creation of cospectral graphs with respect to Seidel matrix is a challenging work because of this. We call two graphs Seidel cospectral if they are cospectral on Seidel matrix. Berman et al. \cite{Berman} construct a family of complete tripartite graphs which are Seidel cospectral. Here we produce a family of Seidel cospectral chain graphs. Let us first prove the following lemma which confirms the inequivalency of two chain graphs.
\begin{lemma}
Two chain graphs with the binary strings $B_1=01^{a_1}0^{a_1}1^{b_1}$ and  $B_2=01^{a_2}0^{a_2}1^{b_2}$ are NOT switching equivalent, where $a_1\neq a_2$, but $2a_1+b_1=2a_2+b_2$.
\end{lemma}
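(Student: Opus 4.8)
The plan is to distinguish the two chain graphs by a switching-invariant. The key point is that switching preserves the Seidel spectrum, but for *inequivalent* graphs we need a finer invariant than the spectrum alone—although here I suspect the cleanest route is actually to show that the two graphs have *different Seidel spectra*, which immediately forces them to be inequivalent (since switching-equivalent graphs are cospectral). So the first thing I would do is compute the Seidel spectrum of a chain graph with binary string $01^{a}0^{a}1^{b}$ using the quotient matrix machinery set up in the excerpt.

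**First**, observe that the binary string $B = 0\,1^{a}\,0^{a}\,1^{b}$ has the form $0^{s_1}1^{t_1}0^{s_2}1^{t_2}$ with $s_1 = 1$, $t_1 = a$, $s_2 = a$, $t_2 = b$, so $k = 2$ and $|\pi| = 4$. By Theorem~\ref{Chain_pm1_Ch1}, the eigenvalue $-1$ has multiplicity $n - 2k + 1 = n - 3$ where $n = 2a + b + 1$. The remaining (at most) $2k = 4$ eigenvalues are the eigenvalues of the $4 \times 4$ quotient matrix $Q_\pi$, which in this case reads
\[
Q_\pi = \begin{bmatrix}
0 & -a & a & -b\\
-1 & a-1 & a & b\\
1 & a & a-1 & -b\\
-1 & a & -a & b-1
\end{bmatrix}.
\]
I would compute the characteristic polynomial of this matrix (a routine $4\times 4$ determinant), and express its coefficients as polynomials in $a$ and $b$. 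Because $n = 2a+b+1$ is held fixed across the two strings $B_1, B_2$, the trace of $S$ (which is $0$) and the trace of $S^2$ (which is $n(n-1)$, fixed) are automatically equal; the first genuinely distinguishing symmetric function should be something like the sum of cubes of the eigenvalues, i.e. $\operatorname{tr}(S^3)$, or equivalently a specific coefficient of the characteristic polynomial of $Q_\pi$ that depends on $a$ in a way not captured by $n$ alone.

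**The main obstacle** will be verifying that the characteristic polynomial of $Q_\pi$ genuinely differs between $(a_1, b_1)$ and $(a_2, b_2)$ under the constraint $2a_1 + b_1 = 2a_2 + b_2$, $a_1 \neq a_2$—in other words, ruling out an algebraic coincidence. Concretely, after substituting $b = n - 1 - 2a$ into the coefficients of $\chi_{Q_\pi}$, I need to check that at least one coefficient is a non-constant polynomial in $a$ (for fixed $n$); then two distinct values $a_1 \neq a_2$ with the same $n$ cannot give the same polynomial unless that coefficient happens to take the same value at both points, which one rules out either by checking the polynomial has degree $\le$ (small) and is strictly monotone on the relevant range, or by a direct factoring argument. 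An alternative, possibly slicker, route that sidesteps the spectral computation entirely: find a combinatorial switching-invariant—for instance, the multiset of Seidel eigenvalues is determined by the switching class, but so is the Seidel *energy* or, more elementarily, one can argue that any graph switching-equivalent to a chain graph on this vertex set must itself have a specific structure, and then count induced $2K_2$'s or use the number of vertices of each Seidel-degree. I would fall back on this only if the determinant route produces an unexpected coincidence. I expect the determinant route to work cleanly, with the distinguishing invariant appearing already at the level of $\operatorname{tr}(S^3)$ or the coefficient of $\lambda$ in $\chi_{Q_\pi}(\lambda)$.
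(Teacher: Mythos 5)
There is a genuine gap, and it is fatal to your primary route. Your plan is to show the two graphs have different Seidel spectra and conclude inequivalence from cospectrality of switching-equivalent graphs. But the entire purpose of this lemma in the paper is to certify inequivalence for pairs that \emph{are} Seidel cospectral: Theorem \ref{th1} applies the lemma to $B_1=01^m0^m1^{2m+r}$ and $B_2=01^{2m}0^{2m}1^r$ (i.e.\ $a_1=m$, $a_2=2m$, $2a_1+b_1=2a_2+b_2=4m+r$), which are shown to have identical Seidel spectra when $m=\tfrac32(r+1)$. Concretely, $01^30^31^7$ and $01^60^61$ both satisfy your hypotheses and both have spectrum $-1^{11},\pm5,11$. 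So the ``algebraic coincidence'' you hope to rule out actually occurs, and occurs precisely in the cases the lemma is needed for; the characteristic polynomial of $Q_\pi$ cannot distinguish them, no matter which coefficient or power trace you examine. Your proposal anticipates this possibility and offers a fallback (a combinatorial switching-invariant, counting induced $2K_2$'s, Seidel-degree multisets), but that fallback is only named, not carried out, so as written the proposal does not prove the statement.

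The paper's own argument is of a completely different character: it fixes the partition of the vertex set of $B_1$ into the four independent cells $V_1,\dots,V_4$ of sizes $1,a_1,a_1,b_1$ and runs a case analysis over the possible switching sets $U$, checking that each choice either destroys the chain-graph structure or returns a graph switching-equivalent to $B_1$ itself (hence with the same parameter $a_1$), never $B_2$ with $a_2\neq a_1$. If you want to salvage your approach you would need to make the fallback precise, e.g.\ by exhibiting an explicit invariant of the switching class that separates $a_1$ from $a_2$ -- the spectrum alone provably cannot do this.
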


\begin{proof}
Let $V_i$ be the independent sets in $B_1$ such that $|V_1|=1, |V_2|=|V_3|=a_1, |V_4|=b_1$. Also let $W_i$ be the independent sets in $B_2$ such that $|W_1|=1, |W_2|=|W_3|=a_2, |W_4|=b_2$. Suppose there exists $U\in \cup_{i=1}^4 V_i$ such that switching $B_1$ with respect to $U$ produces the chain graph $G^{'}=B_2$. Now there arise following cases. \\

\textbf{Case 1.} Let $x\in U$. If $x$ belongs to either of the vertex sets $V_1$, $V_2$, or $V_3$, then switching $B_1$ with respect to $U$ produces no chain graph.\\

\textbf{Case 2.} Let $U=\{x_i\in V_4$ : $i=1,2,3, \cdots, q$\}. Then switching $B_1$ with respect to $U$ produces the chain graph switching-equivalent to $G^{'}=0^{q+1}1^{a_1}0^{a_1}1^{b_1-q}$. \\

\textbf{Case 3.} Let $U=\{x, y : x\in V_1 , y\in V_4\}$. Then switching $B_1$ with respect to $U$ produces the chain graph switching-equivalent to $G^{'}=01^{a_1}0^{a_1}1^{b_1}.$\\

\textbf{Case 4.} Let $U=\{V_i \cup V_j\}$. Now if $(i, j) \in \{(1, 4), (2, 3)\}$, then switching $B_1$ with respect to $U$ produces the chain graph switching-equivalent to $G^{'}=01^{a_1}0^{a_1}1^{b_1}.$ \\

\textbf{Case 5.} Let $U=\{V_i \cup V_j\}$. Now if $(i, j) \in \{(3, 4), (1, 3), (1, 2), (3, 3)\}$, then switching $B_1$ with respect to $U$ produces no chain graph.\\
Therefore from the above cases we conclude that any switching on $B_1$ does not produce $B_2$ if $a_1\neq a_2$. This completes the proof of the lemma.
\end{proof}
Motivated from this lemma we construct a family of Seidel cospectral chain graphs (inequivalent) as follows.
\begin{theorem}
\label{th1}
The chain graphs $ b_1=01^m0^m1^{2m+r}$ and $b_2=01^{2m}0^{2m}1^r$ are Seidel cospectral if there exists an odd positive integer $r$ such that
$$ m=\frac{3}{2}(r+1)$$.
\end{theorem}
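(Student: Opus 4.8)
The plan is to compare the characteristic polynomials of the two Seidel matrices via their quotient matrices. Both chain graphs have $|\pi| = 4$ (that is, $k=2$), and both have the same order $n = 4m + r + 1$, since $1 + m + m + (2m+r) = 1 + 2m + 2m + r$. By Theorem \ref{Chain_pm1_Ch1}, the eigenvalue $-1$ occurs with multiplicity $n - 2k + 1 = n - 3$ in each Seidel matrix, so in both cases the characteristic polynomial factors as $(x+1)^{n-3}$ times the characteristic polynomial of the $4\times 4$ quotient matrix $Q_\pi$ (using that the eigenvalues of $S$ not equal to $-1$ are exactly the eigenvalues of $Q_\pi$, counted with the appropriate multiplicity, together with Theorem \ref{Chain_quotient_th2} to control multiplicities). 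Hence it suffices to show that the two $4\times4$ quotient matrices have the same characteristic polynomial.

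First I would write down the two quotient matrices explicitly from the general formula for $Q_\pi$ with $k=2$. For $b_1$ we have $(s_1,t_1,s_2,t_2) = (1, m, m, 2m+r)$, giving
\[
Q_1 = \begin{bmatrix} 0 & -m & m & -(2m+r)\\ -1 & m-1 & m & 2m+r\\ 1 & m & m-1 & -(2m+r)\\ -1 & m & -m & 2m+r-1\end{bmatrix},
\]
and for $b_2$ we have $(s_1,t_1,s_2,t_2) = (1, 2m, 2m, r)$, giving
\[
Q_2 = \begin{bmatrix} 0 & -2m & 2m & -r\\ -1 & 2m-1 & 2m & r\\ 1 & 2m & 2m-1 & -r\\ -1 & 2m & -2m & r-1\end{bmatrix}.
\]
Next I would compute $\det(xI - Q_i)$ for each; because these are $4\times4$ matrices with lots of repeated columns of constants, row/column operations (subtracting adjacent rows and columns to expose the $-I$ part coming from the diagonal correction) collapse the determinant to something manageable. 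The characteristic polynomial will have the form $x^4 + c_3 x^3 + c_2 x^2 + c_1 x + c_0$, where $c_3 = -\operatorname{tr}(Q_i)$; note $\operatorname{tr}(Q_1) = 0 + (m-1) + (m-1) + (2m+r-1) = 4m + r - 3 = n - 4$ and $\operatorname{tr}(Q_2) = 0 + (2m-1) + (2m-1) + (r-1) = 4m + r - 3 = n-4$, so the $x^3$ coefficients already agree. One then checks $c_2, c_1, c_0$ agree; each of these is a polynomial in $m$ and $r$, and imposing $2m = 3(r+1)$, equivalently $m = \tfrac32(r+1)$, should be exactly the relation that forces the remaining coefficients to coincide.

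The main obstacle I anticipate is purely computational: carrying the $4\times4$ determinants far enough to read off $c_0, c_1, c_2$ as clean polynomials in $m, r$, and then verifying that the single substitution $m = \tfrac32(r+1)$ kills all three discrepancies simultaneously (it is not a priori obvious that one equation suffices to match three coefficients — that it does is the content of the theorem, and presumably reflects extra structure, e.g. that $\det Q_1 = \det Q_2$ and one other symmetric function match identically while only one fails without the hypothesis). A secondary point to handle carefully is the bookkeeping that the full Seidel spectrum really is determined by $Q_\pi$ together with the multiplicity of $-1$: one must confirm that $-1$ is not an eigenvalue of $Q_\pi$ (or, if it is, that Theorem \ref{Chain_quotient_th2} still pins down the total multiplicity correctly) so that equality of the quotient characteristic polynomials plus the common multiplicity $n-3$ of $-1$ yields equality of the full characteristic polynomials. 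Finally, the odd parity of $r$ guarantees $m = \tfrac32(r+1)$ is an integer (since $r+1$ is even), and the earlier Lemma guarantees $b_1$ and $b_2$ are genuinely inequivalent because $m \neq 2m$; together these complete the claim.
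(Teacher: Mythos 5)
Your proposal follows essentially the same route as the paper: both reduce the problem to the $4\times 4$ quotient matrices of the two strings (the paper writes out their four non-$(-1)$ eigenvalues explicitly and matches them in three cases, each of which simplifies to $m=\tfrac32(r+1)$; you match characteristic-polynomial coefficients, which is the same verification in symmetric-function form), and both invoke the preceding Lemma for inequivalence. One bookkeeping correction: the factorization is $\chi_S(x)=(x+1)^{n-4}\chi_{Q_\pi}(x)$, since $Q_\pi$ itself carries one copy of the eigenvalue $-1$ (consistent with Theorem \ref{Chain_pm1_Ch1}'s count $n_{-1}(S)=n-3$), but as both graphs share the same $n$ and a $4\times4$ quotient, this does not affect the validity of your reduction.
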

\begin{proof}
Note that the given chain graphs are not switching-equivalent as demonstrated in the previous lemma. Their eigenvalues are

\begin{center}
\begin{tabular}{|c|c|}
\hline
Graph & Eigenvalues \\ \hline
$01^m0^m1^{2m+r}$ & $-1^{4m+4r-2}, ~2m-1,~\frac{1}{2}(2m+r-1\pm\sqrt{20m^2+12mr+12m+r^2+2r+1})$\\
\hline
$01^{2m}0^{2m}1^r$ & $-1^{4m+4r-2}, ~4m-1, ~\frac{1}{2}(r-1\pm \sqrt{16mr+16m+r^2+2r+1})$ \\
\hline
\end{tabular}
\end{center}

There arise three cases.\\
\textbf{Case 1.} $4m-1= \frac{1}{2}(2m+r-1+ \sqrt{20m^2+12mr+12m+r^2+2r+1})$. \\ 
\textbf{Case 2.} $2m-1= \frac{1}{2}(r-1+ \sqrt{16mr+16m+r^2+2r+1})$.\\
\textbf{Case 3.} $\frac{1}{2}(2m+r-1- \sqrt{20m^2+12mr+12m+r^2+2r+1})=\frac{1}{2}(r-1- \sqrt{16mr+16m+r^2+2r+1})$.\\
On simplification of the above three cases, we obtain the required result.
\end{proof}
\begin{remark}
This classifies the chain graphs with exactly four distinct Seidel eigenvalues.
\end{remark}
\begin{corollary}
In the above theorem, when cospectrality occurs then the spectral radius is $n-(r+2)$.
\end{corollary}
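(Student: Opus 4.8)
The plan is to work out what the spectral radius is in each of the three cospectrality cases identified in Theorem~\ref{th1}, and then to use the relation $m=\tfrac{3}{2}(r+1)$ to simplify. Since the total number of vertices for either chain graph is $n = 1 + m + m + (2m+r) = 4m+r+1$ (equivalently $1+2m+2m+r = 4m+r+1$), substituting $m=\tfrac32(r+1)$ gives $n = 6(r+1)+r+1 = 7r+7 = 7(r+1)$. Thus the target identity $n-(r+2) = 7r+5$ is what I will aim to match the spectral radius against.

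First I would identify the candidate for the spectral radius. Among the eigenvalues listed, the obvious candidates for the largest are $2m-1$ (resp.\ $4m-1$) and the $``+$'' branch of the quadratic-surd eigenvalues. I would argue that under the cospectrality hypothesis the spectral radius is realized by one of the surd eigenvalues; concretely, from Case~1 one reads off that $4m-1$ equals $\tfrac12(2m+r-1+\sqrt{20m^2+12mr+12m+r^2+2r+1})$, so the spectral radius of $b_1$ is $\tfrac12(2m+r-1+\sqrt{\cdots})$, which by Case~1 equals $4m-1$. Substituting $m=\tfrac32(r+1)$ gives $4m-1 = 6(r+1)-1 = 6r+5$; but wait — this does not match $7r+5$ unless $r=0$, so I must be more careful about which case actually occurs under $m=\tfrac32(r+1)$. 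The correct approach is: plug $m=\tfrac32(r+1)$ directly into the discriminants and see which of the three scalar equations is satisfied identically in $r$.

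So the key computational step is: set $m=\tfrac32(r+1)$ and evaluate $20m^2+12mr+12m+r^2+2r+1$ and $16mr+16m+r^2+2r+1$. I expect one of these to become a perfect square of a linear polynomial in $r$ — this is exactly the arithmetic condition that forces rationality/integrality of the eigenvalues and is the real content behind the choice $m=\tfrac32(r+1)$. Having found that perfect square, say the discriminant equals $(\alpha r + \beta)^2$, the relevant surd eigenvalue becomes $\tfrac12\bigl(\text{(linear in }r) \pm (\alpha r+\beta)\bigr)$, a linear polynomial in $r$; I then take the larger of the two chain graphs' spectral radii and check it equals $7r+5 = n-(r+2)$. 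I would present this as: compute the discriminant for $b_2$, show it equals $(r + 8m + 1)^2$-type expression evaluated at $m=\tfrac32(r+1)$, deduce the spectral radius of $b_2$ is $\tfrac12\bigl((r-1) + (\text{that linear term})\bigr)$, and simplify to $7r+5$.

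The main obstacle is bookkeeping: making sure the discriminants genuinely factor as perfect squares when $m=\tfrac32(r+1)$ (they must, for the statement to be true), and correctly identifying which branch and which of the two graphs gives the spectral radius — the naive guess $4m-1$ or $2m-1$ is a decoy, since those linear-in-$r$ expressions give $6r+5$ and $3r+2$, both smaller than $7r+5$ for $r\ge1$, so the spectral radius must come from the surd branch. Once the perfect-square factorization is in hand the rest is a one-line substitution, so I would keep the writeup short: state $n=7(r+1)$, exhibit the factored discriminant, read off the top eigenvalue, and conclude $\rho(S) = 7r+5 = n-(r+2)$.
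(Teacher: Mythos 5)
Your overall plan (substitute $m=\tfrac32(r+1)$, show the two discriminants become perfect squares, and read off the largest eigenvalue) is sound, but an arithmetic slip at the very start derails the execution. With $n=4m+r+1=7r+7$ you have $n-(r+2)=6r+5$, \emph{not} $7r+5$. Because of this you dismiss the candidate $4m-1=6(r+1)-1=6r+5$ as a ``decoy'' that ``does not match unless $r=0$,'' when in fact it is exactly the claimed spectral radius. Worse, you then set out to find an eigenvalue equal to $7r+5$, which does not exist: carrying out your own computation, $20m^2+12mr+12m+r^2+2r+1=64(r+1)^2$ and $16mr+16m+r^2+2r+1=25(r+1)^2$ at $m=\tfrac32(r+1)$, so the surd eigenvalues of $b_1$ are $\tfrac12\bigl(4r+2\pm 8(r+1)\bigr)$, i.e.\ $6r+5$ and $-(2r+3)$, and those of $b_2$ are $\tfrac12\bigl(r-1\pm 5(r+1)\bigr)$, i.e.\ $3r+2$ and $-(2r+3)$. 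Both graphs therefore have the common spectrum $\{-1^{(n-3)},\,-(2r+3),\,3r+2,\,6r+5\}$, the top surd branch of $b_1$ coinciding with $4m-1$ (this coincidence is precisely Case~1 of the cospectrality argument, so ``linear candidate'' versus ``surd branch'' is not a real dichotomy here). The maximum is $6r+5=n-(r+2)$, which is what the corollary asserts; the paper itself simply reads this off from the eigenvalue table in the proof of Theorem~\ref{th1}. Correct the target value from $7r+5$ to $6r+5$ and your argument closes immediately.
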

\begin{corollary}
In Theorem \ref{th1}, when cospectrality occurs then the smallest eigenvalue is simple and given by the formula $\lambda_{min}=-(2m-r)$.
\end{corollary}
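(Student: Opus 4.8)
The plan is to read off the Seidel spectrum of $b_1=01^m0^m1^{2m+r}$ from the table in the proof of Theorem~\ref{th1}, substitute the cospectrality condition $m=\tfrac{3}{2}(r+1)$, and then simply compare the resulting values. Since $b_2$ has the same spectrum by Theorem~\ref{th1}, it suffices to argue for $b_1$.

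First I would fix notation: here $|\pi|=4$ and $n=4m+r+1$, so by Theorem~\ref{Chain_pm1_Ch1} the eigenvalue $-1$ of $S$ has multiplicity $n-3$. As the multiplicities $(n-3)+1+1+1=n$ already exhaust $S$, the remaining three eigenvalues are exactly $2m-1$ and
$$\mu_\pm=\frac{1}{2}\left(2m+r-1\pm\sqrt{20m^2+12mr+12m+r^2+2r+1}\right),$$
these being precisely the non-$(-1)$ roots of the characteristic polynomial of the quotient matrix $Q_\pi$; in particular the multiplicity of any eigenvalue of $S$ other than $-1$ equals its multiplicity as a root of $\det(xI-Q_\pi)$.

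Next I would carry out the substitution $2m=3r+3$. A direct computation gives $2m+r-1=4r+2$ and, for the radicand,
$$20m^2+12mr+12m+r^2+2r+1=64r^2+128r+64=\bigl(8(r+1)\bigr)^2,$$
so that $\mu_-=\tfrac{1}{2}\bigl((4r+2)-(8r+8)\bigr)=-(2r+3)$, while $\mu_+=6r+5$ and $2m-1=3r+2$. Rewriting $2r+3=3r+3-r=2m-r$ identifies $\mu_-$ with the claimed value $-(2m-r)$. Finally, since $r$ is a positive integer we have the strict ordering $-(2r+3)<-1<3r+2<6r+5$, hence $\lambda_{\min}=\mu_-=-(2m-r)$; and because $-(2r+3)$ is distinct from $-1$, from $2m-1$, and from $\mu_+$, it is a simple root of $\det(xI-Q_\pi)$ and therefore a simple eigenvalue of $S$ (Theorem~\ref{Chain_quotient_th2} also bounds its multiplicity by two, but the explicit inequalities already give simplicity).

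The only step requiring any care is the algebraic identity showing that $20m^2+12mr+12m+r^2+2r+1$ collapses to the perfect square $64(r+1)^2$ after the substitution, which also forces $\sqrt{D}$ to be an integer; once that is in hand the rest is bookkeeping, so I do not anticipate a genuine obstacle.
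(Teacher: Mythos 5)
Your proposal is correct and follows essentially the same route the paper intends: the corollary is a direct substitution of $2m=3r+3$ into the spectrum listed in the proof of Theorem \ref{th1}, and your computation that the radicand collapses to $64(r+1)^2$, giving $\mu_-=-(2r+3)=-(2m-r)<-1<2m-1<\mu_+$, is accurate (it matches every row of Table \ref{chain_table1}). As a side note, your multiplicity $n-3=4m+r-2$ for the eigenvalue $-1$ is the value forced by Theorem \ref{Chain_pm1_Ch1}; the exponent $4m+4r-2$ printed in the paper's table appears to be a typo.
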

As a quick application of this result, here one example is considered to find the number of equiangular lines in the $d-$ dimensional Euclidean space $\mathbb{R}^d$ for some positive integer $d$.
\begin{example}
In the Theorem \ref{th1}, when cospectrality occurs, then the least eigenvalue is $-(2m-r)$ which occurs once. Note that the order of the Seidel matrix is $4m+r+1$.  The corresponding $4m+r+1$ lines form the largest known equiangular line system in $\mathbb{R}^{4m+r}$. And, the common angle is $\frac{1}{2m-r}$.
\end{example}
We now give a list of $10$ pair of Seidel cospectral chain graphs in the following table (see Table \ref{chain_table1}).
\begin{table}[h]
	\begin{center}
		\begin{tabular}{|c|c|c|c|}
			\hline
No.  of vertices & Chain graph$(01^m0^m1^{2m+r})$ & chain graph$(01^{2m}0^{2m}1^r)$ & Eigenvalues \\ \hline
$14$ & $01^30^31^7$ & $01^60^61$ & $-1^{11}, \pm 5, 11$ \\
\hline
$28$ & $01^60^61^{15}$ & $01^{12}0^{12}1^3$ & $-1^{25}, -9, 11, 23 $  \\
\hline
$42$ & $01^90^91^{23}$ & $01^{18}0^{18}1^5$ & $-1^{39}, -13, 17, 35 $  \\
\hline
$56$ & $01^{12}0^{12}1^{31}$ & $01^{24}0^{24}1^7$ & $-1^{53}, -17, 23, 47 $  \\
\hline
$70$ & $01^{15}0^{15}1^{39}$ & $01^{30}0^{30}1^9$ & $-1^{67}, -21, 29, 59 $  \\
\hline
$84$ & $01^{18}0^{18}1^{47}$ & $01^{36}0^{36}1^{11}$ & $-1^{81}, -25, 35, 71 $  \\
\hline
$98$ & $01^{21}0^{21}1^{55}$ & $01^{42}0^{42}1^{13}$ & $-1^{95}, -29, 41, 83 $  \\
\hline
$112$ & $01^{24}0^{24}1^{63}$ & $01^{48}0^{48}1^{15}$ & $-1^{109}, -33, 47, 95 $  \\
\hline
$126$ & $01^{27}0^{27}1^{71}$ & $01^{54}0^{54}1^{17}$ & $-1^{123}, -37, 53, 107 $  \\
\hline
$140$ & $01^{30}0^{30}1^{79}$ & $01^{60}0^{60}1^{19}$ & $-1^{137}, -41, 59, 119 $  \\
\hline
\end{tabular}
\end{center}
\caption{Cospectral chain graphs}
\label{chain_table1}
\end{table}


\section{Integral chain graphs}
Which graphs have integral spectra? This was first raised by Harary et el. \cite{harary} in $1974$. After that the problem of finding integral graphs become more popular. Integral graphs are actually quite uncommon. We call a graph Seidel integral if the Seidel matrix consists entirely of integer eigenvalue. In this section we try to obtain few Seidel integral chain graphs. Zhao et al. \cite{wei} produced Seidel integral complete multipartite graphs. They obtained a necessary and sufficient condition for a complete tripartite graph $K_{p,q,r}$ to be Seidel integral. Whereas Wang et al. \cite{wang} provided a list of Seidel integral complete r-partite graphs. Suppose $G$ is a chain graph on $n$ vertices with the binary string $b=0^{s_1} 1^{t_1} 0^{s_2} \ldots 0^{s_k} 1^{t_k}$. Here we obtain a family of Seidel integral chain graphs for $k=2$.

\begin{theorem}
\label{th2}
The chain graph with binary string $0^s1^{2s}0^{2s}1^s$ is Seidel integral.
\end{theorem}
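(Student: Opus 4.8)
The plan is to reduce the whole computation to one fixed $4\times 4$ integer matrix. First I would record the data attached to $G = 0^s1^{2s}0^{2s}1^s$: it is a chain graph with $k=2$ and $(s_1,t_1,s_2,t_2)=(s,2s,2s,s)$, hence of order $n=6s$ with equitable partition of size $|\pi|=4$. By Theorem \ref{Chain_pm1_Ch1} the eigenvalue $-1$ of $S$ has multiplicity $n-2k+1=6s-3$, and the four remaining eigenvalues of $S$ (with the multiplicities inherited from the partition) are precisely the eigenvalues of the quotient matrix $Q_\pi$. So the whole problem is to show that the $4\times 4$ matrix $Q_\pi$ has integer spectrum.

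The key observation is that, after substituting $(s_1,t_1,s_2,t_2)=(s,2s,2s,s)$ into the displayed form of $Q_\pi$, every off-diagonal entry is a multiple of $s$ while each diagonal entry is $s$ times a fixed number minus $1$; concretely $Q_\pi = sM - I$, where
$$M=\begin{bmatrix} 1 & -2 & 2 & -1 \\ -1 & 2 & 2 & 1 \\ 1 & 2 & 2 & -1 \\ -1 & 2 & -2 & 1 \end{bmatrix}$$
is an integer matrix independent of $s$. Therefore the eigenvalues of $Q_\pi$ are exactly $s\mu-1$ as $\mu$ ranges over the eigenvalues of $M$, and it suffices to diagonalise $M$ once.

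For $M$ I would proceed either by expanding its characteristic polynomial, which factors as $\mu^4-6\mu^3+32\mu=\mu(\mu+2)(\mu-4)^2$, or, more transparently, by exploiting symmetry: $M$ commutes with the permutation matrix $P$ exchanging coordinates $1\leftrightarrow 4$ and $2\leftrightarrow 3$, so $M$ stabilises the two $P$-eigenspaces $\langle(1,0,0,1),(0,1,1,0)\rangle$ and $\langle(1,0,0,-1),(0,1,-1,0)\rangle$. On the first, $(1,0,0,1)$ and $(0,1,1,0)$ are already eigenvectors with eigenvalues $0$ and $4$; on the second, $M$ acts by a $2\times 2$ integer matrix whose eigenvalues are readily found to be $4$ and $-2$. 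Either way the eigenvalues of $M$ are $0,4,4,-2$, so those of $Q_\pi$ are $-1,\ 4s-1,\ 4s-1,\ -2s-1$.

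Assembling the pieces, and noting that the value $-1$ already appears once among the eigenvalues of $Q_\pi$ (so it must not be counted twice against Theorem \ref{Chain_pm1_Ch1}), the Seidel spectrum of $G$ is
$$\{\, (-1)^{\,6s-3},\ (4s-1)^{\,2},\ (-2s-1)^{\,1} \,\},$$
which is integral; the exponents can be checked against $\operatorname{tr} S = 0$ and $\operatorname{tr} S^2 = n(n-1)$. I do not expect a genuine obstacle: the only real idea is the factorisation $Q_\pi = sM - I$ that strips away the parameter $s$, after which the work is a bounded $4\times 4$ computation. The one point that needs care is the bookkeeping of the multiplicity of $-1$, so that the quotient eigenvalues are neither over- nor under-counted when merged with Theorem \ref{Chain_pm1_Ch1}.
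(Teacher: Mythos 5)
Your proposal is correct, and the factorisation $Q_\pi = sM-I$ with $M$ independent of $s$, together with the symmetry reduction giving $\mathrm{spec}(M)=\{0,4,4,-2\}$ and the multiplicity bookkeeping via Theorem \ref{Chain_pm1_Ch1}, checks out (e.g. it reproduces the table entry $-1^3,-3,3^2$ for $s=1$ and satisfies $\operatorname{tr}S=0$, $\operatorname{tr}S^2=n(n-1)$). This is essentially the route the paper intends through its quotient-matrix setup, except that the paper's proof merely asserts the spectrum $-1^{6s-3},\,-2s-1,\,4s-1,\,4s-1$ without derivation, so your argument actually supplies the missing computation.
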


\begin{proof}
The Seidel eigenvalues of the given chain graph are
$$-1^{6s-3}, -2s-1, 4s-1, 4s-1,$$
where exponents denote the frequency of occurrence of eigenvalue.
\end{proof}
\begin{remark}
Theorem \ref{th2} classifies the chain graphs with exactly three distinct (rational) Seidel eigenvalues such that its switching class contains a regular graph of valency $3s$. This can be viewed as an example in support of a question posed by Greaves (Question A, \cite{Greaves}) affirmatively.
\end{remark}
\begin{corollary}
For the chain graph $0^s1^{2s}0^{2s}1^s$, $\lambda_{min}=-(2s+1)$ with multiplicity $2$.
\end{corollary}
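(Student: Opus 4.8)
The claim is essentially a reading of the Seidel spectrum already recorded in Theorem~\ref{th2}, so the plan is to (i) single out the least of the three distinct eigenvalues of $G=0^s1^{2s}0^{2s}1^s$, and (ii) account for its multiplicity. For (i): by Theorem~\ref{th2} the distinct Seidel eigenvalues of $G$ are $-1$, $-(2s+1)$ and $4s-1$, and since $s\ge 1$ is an integer we have $2s+1\ge 3$ and $4s-1\ge 3$, so that
\[
-(2s+1)\ \le\ -3\ <\ -1\ <\ 3\ \le\ 4s-1 .
\]
Hence $-(2s+1)$ is strictly smaller than the other two eigenvalues, and therefore $\lambda_{min}(S)=-(2s+1)$.

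For (ii) I would go through the quotient matrix, which simultaneously supplies the multiplicity and re-confirms the spectrum of Theorem~\ref{th2}. The equitable partition $\pi$ of $G$ has $|\pi|=2k=4$ cells of sizes $(s,2s,2s,s)$, and substituting $s_1=s,\ t_1=2s,\ s_2=2s,\ t_2=s$ into the displayed form of $Q_\pi$ yields an explicit $4\times4$ integer matrix. A direct check shows that $(1,0,0,1)^{\top}$, $(1,1,-1,-1)^{\top}$ and $(0,1,1,0)^{\top}$ are eigenvectors of $Q_\pi$ for the eigenvalues $-1$, $-(2s+1)$ and $4s-1$ respectively; since $\operatorname{tr}Q_\pi=6s-4$, the fourth eigenvalue is then forced to be $4s-1$ as well, so
\[
\det(\lambda I-Q_\pi)=(\lambda+1)\,(\lambda+2s+1)\,(\lambda-4s+1)^{2}.
\]
Every eigenvalue of $Q_\pi$ is a Seidel eigenvalue of $G$, and by Theorem~\ref{Chain_pm1_Ch1} the remaining $n-2k=6s-4$ eigenvalues of $S$ are all equal to $-1$; as $-(2s+1)\ne -1$ for $s\ge1$, those extra copies of $-1$ do not enlarge the $(-(2s+1))$-eigenspace, and the multiplicity of $\lambda_{min}$ in $S$ coincides with the multiplicity it carries as a root of $\det(\lambda I-Q_\pi)$ above.

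I do not expect a genuine obstacle here: the only computation is verifying the three eigenvector relations for $Q_\pi$ together with its trace, all of which are short, and everything else is bookkeeping with Theorems~\ref{th2} and~\ref{Chain_pm1_Ch1}. If one prefers to bypass the quotient matrix altogether, both the value and the multiplicity of $\lambda_{min}$ can be quoted directly from the spectrum listed in Theorem~\ref{th2}.
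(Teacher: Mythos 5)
Your part (i) is fine, and your quotient-matrix computation in part (ii) is correct: with $(s_1,t_1,s_2,t_2)=(s,2s,2s,s)$ the vectors $(1,0,0,1)^{\top}$, $(1,1,-1,-1)^{\top}$, $(0,1,1,0)^{\top}$ are indeed eigenvectors of $Q_\pi$ for $-1$, $-(2s+1)$, $4s-1$, and the trace $6s-4$ forces the fourth eigenvalue to be $4s-1$. But notice what you have actually proved: your characteristic polynomial $(\lambda+1)(\lambda+2s+1)(\lambda-4s+1)^{2}$ contains the factor $(\lambda+2s+1)$ to the \emph{first} power, and the remaining $6s-4$ eigenvalues of $S$ are all $-1$, so the multiplicity of $\lambda_{min}=-(2s+1)$ comes out as $1$, not $2$. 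The statement you were asked to prove asserts multiplicity $2$, and your proof ends by claiming to have established it while in fact deriving the opposite. This is the genuine gap: you never reconcile your (correct) count with the claimed count.

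The discrepancy is not yours to fix by a cleverer argument --- the corollary as stated in the paper is inconsistent with the paper's own Theorem~\ref{th2}, whose proof lists the spectrum as $-1^{6s-3},\,-2s-1,\,4s-1,\,4s-1$, and with Table~\ref{chain_table2}, where for $s=1$ the graph $01^20^21$ has spectrum $-1^3,-3,3^2$, i.e.\ the least eigenvalue $-3$ is simple and it is $4s-1=3$ that has multiplicity $2$. A correct write-up should either prove that $\lambda_{min}=-(2s+1)$ is \emph{simple} (which is what your computation shows), or explicitly flag that the stated multiplicity $2$ appears to be an error, presumably a confusion with the multiplicity of the largest eigenvalue $4s-1$. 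As it stands, your proposal silently asserts agreement with a false claim, which a referee would have to reject.
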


\begin{theorem}
	\label{chain_integral_2}
The $n$ vertex chain graph with binary string $01^m0^m1^{n-2m-1}$ is Seidel integral if $(n, m)$ takes any one of the following values

$$(n, m) \in \begin{cases}
\{(3r, r)\} &\text{for } r\geq 2,\\
\{(13r, 6r), (13r, 2r)\}&\text{for } r\geq 2,\\
 \{(2r^2+2r+2, r^2+r)\}&\text{for } r\geq 1, \\
\{(4r^2-2r+1, r)\}&\text{for } r\geq 3, \\
\{(4r^2+4r+4, 2r^2+2r)\}& \text{for } r\geq 1,
\end{cases}
$$
where $r$ is a positive integer.
\end{theorem}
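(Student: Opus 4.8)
The plan is to reduce the claim to a single quadratic-in-$m$ formula for the non-trivial Seidel eigenvalues of the chain graph $01^m0^m1^{n-2m-1}$ and then to read off, case by case, when that quadratic has integer roots. Concretely, I would first invoke Theorem~\ref{Chain_pm1_Ch1}: since the binary string has $|\pi|=4$, the eigenvalue $-1$ occurs with multiplicity $n-3$, so only three eigenvalues remain and they are precisely the eigenvalues of the $4\times 4$ quotient matrix $Q_\pi$ other than $-1$ (one of which is $-1$ itself by Theorem~\ref{Chain_quotient_th2} considerations). Computing $\det(xI-Q_\pi)$ for the string $(s_1,t_1,s_2,t_2)=(1,m,m,n-2m-1)$ and factoring out the $(x+1)$ factor leaves a cubic; but in fact one root is the integer $2m-1$ (this is the analogue of the $2m-1$ eigenvalue appearing in Theorem~\ref{th1}), so dividing once more leaves a monic quadratic
$$x^2-(n-2m-2)x-\bigl(\text{linear in }n,m\bigr)=0,$$
whose discriminant $D(n,m)$ is a polynomial in $n$ and $m$. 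Seidel integrality is then equivalent to $2m-1\in\mathbb Z$ (automatic) together with $D(n,m)$ being a perfect square and the resulting roots having the correct parity.

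Next I would carry out the discriminant computation explicitly and write $D(n,m)=\square$ as a Diophantine condition. The five families in the statement should correspond to five natural substitutions that make $D$ a square identically in $r$: for instance $n=3r,\ m=r$ makes $D$ collapse (the graph $01^r0^r1^{r-1}$, the ``balanced'' case, which one expects to be integral by symmetry); the pair $(13r,6r)$ and $(13r,2r)$ should arise because with $n=13r$ the discriminant becomes a square in two ways, reflecting the factor $13$; the family $n=2r^2+2r+2,\ m=r^2+r$ is the substitution that turns $D$ into $(\text{something})^2$ because $n-1=2m+1$-type relations force a perfect-square trinomial; $n=4r^2-2r+1,\ m=r$ is the case where $D$ becomes $(2r^2-\cdots)^2$; and $n=4r^2+4r+4,\ m=2r^2+2r$ is another such completion of the square. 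In each case the verification is: substitute, simplify $D$ to a closed form, check it equals an explicit square, and confirm the two roots $\tfrac12\bigl((n-2m-2)\pm\sqrt{D}\bigr)$ are integers (a parity check on $n-2m-2$ versus $\sqrt D$). The lower bounds on $r$ ($r\ge2$ or $r\ge3$) are exactly the thresholds ensuring $s_i,t_i\ge1$, i.e.\ that $n-2m-1\ge1$ and $m\ge1$, so the string is a genuine chain graph; these I would list as side conditions read off from $s_1=1,\ t_1=s_2=m,\ t_2=n-2m-1$.

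The main obstacle is not any single calculation but rather establishing \emph{completeness} versus merely \emph{sufficiency}: the theorem as stated only asserts that these $(n,m)$ give integral graphs (an ``if''), so strictly I only need the five substitutions to work, which is routine algebra. If one wanted the converse — that these are the \emph{only} integral chain graphs of this shape — one would have to solve $D(n,m)=\square$ completely, which is a genuine (and generally hard) problem about representing a quadratic polynomial by a square and is presumably why the statement is phrased as a sufficient condition only. I would therefore present the proof as: (1) derive the quadratic and its discriminant $D(n,m)$; (2) for each of the five families substitute and exhibit $D$ as an explicit perfect square together with the parity check; (3) note the range restrictions on $r$ come from positivity of the block sizes. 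The bookkeeping in step (2) — keeping the five discriminant simplifications straight — is the part most prone to slips, so I would organize it as a short table mirroring Table~\ref{chain_table1}, giving for each family the value of $D$, its square root, and the three resulting eigenvalues $2m-1$ and $\tfrac12\bigl((n-2m-2)\pm\sqrt D\bigr)$.
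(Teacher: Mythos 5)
Your proposal follows essentially the same route as the paper: the paper likewise reduces everything to the eigenvalue list $-1^{\,n-3}$, $2m-1$, $-(m+1)+\tfrac12\bigl[n\pm\sqrt{(n-2m)(n+6m)}\bigr]$, observes that the parity works out automatically so that integrality is equivalent to $(n-2m)(n+6m)$ being a perfect square, and then produces the five families from that Diophantine condition. The only difference is one of direction — the paper \emph{derives} the families by equating $h(2y+h)$ to an ad hoc list of candidate squares, whereas you propose to \emph{verify} the given substitutions directly, which for the stated ``if'' direction is equally valid and, as you correctly note, sidesteps any (unclaimed) completeness issue.
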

\begin{proof}
The Seidel eigenvalues of the given chain graph are
$$-1^{n-3}, ~2m-1,~ -(m+1)+\frac{1}{2}\Big[n \pm \sqrt{(n-2m)(n+6m)}\Big],$$ where exponents denote the frequency of occurrence of eigenvalue. Note that $(n-2m)(n+6m)$ is even (res. odd) if $n$ is even (res. odd). Therefore the eigenvalues are integral if the term $(n-2m)(n+6m)$ is a perfect square.  Now,\\$(n-2m)(n+6m)=(n+2m)^2-(4m)^2=x^2-y^2$, (say), where $x=n+2m$, $y=4m$. Since $n>2m$, so $x>y$. Let $x=y+h$. Therefore
\begin{equation}
\label{equation_19}
n=h+2m
\end{equation}
Now, $x^2-y^2=(y+h)^2-y^2=h(2y+h)$.
It is easy to verify that 
$$ h^2<h(2y+h)<4(h+2m)^2 ,$$
which means that $(n-2m)(n+6m)$ can takes the perfect square value lie in the open interval $(h^2, 4(h+2m)^2)$. The possible perfect square which lie in the open interval $(h^2, 4(h+2m)^2)$ are $(h+m)^2, (h+2m)^2, (h+3m)^2, (h+2r)^2, \{h+m+(2p-1)\}^2~ for~ p\geq 3, (h+i)^2~ for~ i=1, 2, 3, 4, 8$. We now consider several cases to obtain the values of $m, n$ such that the expression within the square root of the eigenvalues become perfect square.\\
\textbf{Case 1.} $h(2y+h)=(h+m)^2$. Simplifying and using equation no. (\ref{equation_19}) we obtain
$$(n, m)=(13r, 6r),$$
where $r$ is an integer $\geq 2$.\\
\textbf{Case 2.} $h(2y+h)=(h+2m)^2$. Simplifying and using equation no.(\ref{equation_19}) we obtain
$$(n, m)=(3r, r),$$
where $r$ is an integer $\geq 2$.\\
\textbf{Case 3.} $h(2y+h)=(h+3m)^2$. Simplifying and using equation no. (\ref{equation_19}) we obtain
$$(n, m)=(13r, 2r),$$
where $r$ is an integer $\geq 2$.\\
\textbf{Case 4.} $h(2y+h)=(h+i)^2~ for~ i=1, 2, 3, 4, 8$. We see that for $i=1, 2, 3$ the value of $h$ becomes fractional, an absurd result. So $i=4, 8$ are the only acceptable case. Simplifying and using equation no. (\ref{equation_19}) we obtain
$$(n, m)\in \{(6, 2), (14, 6), (12, 4)\}.$$
\textbf{Case 5.} $h(2y+h)=(h+2r)^2$. This gives 
\begin{equation}
\label{equation_20}
h=\frac{4r^2}{8m-4r}
\end{equation}
 which should be an integer value. Now the divisors of $4r^2$ are $2, 4, r, r^2, 2r, 2r^2, 4r, 4r^2$. So we take the following subcases.\\
\textbf{Subcase a.} $8m-4r=4$. Simplifying and using equations no. (\ref{equation_19}), (\ref{equation_20}) we obtain
$$(n, m)=(4r^2-2r+1, r),$$ 
where $r$ is an integer $\geq 3$. \\
\textbf{Subcase b.} $8m-4r=4r$. Simplifying and using equations no.  (\ref{equation_19}), (\ref{equation_20}) we obtain 
$$(n, m)=(3r, r),$$ 
where $r$ is an integer $\geq2$. \\
\textbf{Subcase c.} $8m-4r=r^2$. Simplifying and using equations no. (\ref{equation_19}), (\ref{equation_20}) we obtain \\
$$(n, m)=(4r^2+4r+4, 2r^2+2r),$$ 
where $r$ is an integer. \\
\textbf{Subcase d.} $8m-4r=2r^2$. Simplifying and using equations no.  (\ref{equation_19}), (\ref{equation_20}) we obtain 
$$(n, m)=(2r^2+2r+2, r^2+r),$$ 
where $r$ is a positive integer.\\
We discard  all other cases where absurd result found. This completes the proof of the theorem.
\end{proof}
\begin{corollary}
For the chain graph $01^m0^m1^{n-2m-1}$, the minimum eigenvalue is given by  $$\lambda_{min}= -(m+1)+\frac{1}{2}\Big[n - \sqrt{(n-2m)(n+6m)}\Big].$$

\end{corollary}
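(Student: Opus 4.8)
The plan is to read the full Seidel spectrum of the chain graph $G=01^m0^m1^{n-2m-1}$ straight off the computation already made in the proof of Theorem~\ref{chain_integral_2}, and then simply pick out its least entry. There the Seidel eigenvalues of $G$ are exhibited as
$$-1 \ \ (\text{with multiplicity } n-3),\qquad 2m-1,\qquad \mu_{\pm}:=-(m+1)+\tfrac{1}{2}\Big[n\pm\sqrt{(n-2m)(n+6m)}\,\Big],$$
where the three exceptional values $2m-1,\mu_-,\mu_+$ are, together with one copy of $-1$, the spectrum of the $4\times4$ quotient matrix $Q_{\pi}$, and the remaining $n-4$ eigenvalues all equal $-1$ by Theorem~\ref{Chain_pm1_Ch1}. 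Consequently $\lambda_{\min}=\min\{-1,\ 2m-1,\ \mu_-,\ \mu_+\}$, and it remains to see which of these four numbers is smallest.

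Two of the comparisons are immediate. Since the block $1^{m}$ is nonempty, $m\ge1$, hence $2m-1\ge1>-1$ and $2m-1$ is never the minimum. Since the last block $1^{n-2m-1}$ is nonempty, $n-2m\ge2>0$, so the radicand $(n-2m)(n+6m)$ is strictly positive; in particular $\mu_\pm$ are real and $\mu_-\le\mu_+$. Thus the whole statement reduces to the single inequality $\mu_-\le-1$.

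To finish I would establish $\mu_-\le-1$ directly from the closed form. Rearranging, $\mu_-\le-1$ is equivalent to $n-2m\le\sqrt{(n-2m)(n+6m)}$; since both sides are positive we may square, obtaining the equivalent inequality $(n-2m)^2\le(n-2m)(n+6m)$, i.e.\ $n-2m\le n+6m$, i.e.\ $0\le8m$, which is true. Hence $\lambda_{\min}=\mu_-=-(m+1)+\tfrac{1}{2}\big[n-\sqrt{(n-2m)(n+6m)}\,\big]$, as claimed; one even gets strict inequality, since equality in the squared form would force $m=0$ or $n=2m$, both excluded. There is no real obstacle here: the only thing worth watching is the strict positivity of $n-2m$, which simultaneously makes the square root real and legitimises the squaring step, and after that the verification is a one-line manipulation of the eigenvalue formula inherited from Theorem~\ref{chain_integral_2}.
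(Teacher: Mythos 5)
Your proposal is correct and follows the same route the paper implicitly takes: the corollary is read directly off the eigenvalue list $-1^{\,n-3},\ 2m-1,\ -(m+1)+\tfrac12\big[n\pm\sqrt{(n-2m)(n+6m)}\big]$ computed in the proof of Theorem \ref{chain_integral_2}, and your one-line check that $\mu_-\le -1$ (via $n-2m\le\sqrt{(n-2m)(n+6m)}$, legitimately squared because $n-2m>0$) just makes explicit the comparison the paper leaves unstated.
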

\begin{remark} 
When cospectrality occurs then Theorem \ref{th1} produces an infinite family of Seidel integral chain graph. Some examples of Seidel integral chain graphs are depicted in Table \ref{chain_table1} and Table \ref{chain_table2}, where exponents denote the repetition of an eigenvalue.
\end{remark}
\begin{table}[h]
	\begin{center}
		\begin{tabular}{|c|c|c|c|}
			\hline
Chain graph$(0^s1^{2s}0^{2s}1^s)$ & Eigenvalues & chain graph $(01^m0^m1^{n-2m-1})$ & Eigenvalues \\ \hline
 $01^20^21$ & $-1^3, -3, 3^2$ & $01^30^31^2$ & $-1^{6},-4,5^2$  \\
\hline
 $0^21^40^41^2$ & $-1^9,-5,7^2$ & $01^30^31^{24}$ & $-1^{28}, -6, 5, 29$  \\
\hline
 $0^31^60^61^3$ & $-1^{15},-7,11^2$ & $01^40^41^{48}$ & $-1^{54}, -8, 7, 55$ \\
\hline
 $0^41^80^81^4$  & $-1^{21},-9,15^2$ & $01^60^61$ & $-1^{11},\pm 5,11$  \\
\hline
 $0^51^{10}0^{10}1^5$ & $-1^{27}, -11,19^2$ & $01^50^51^{80}$ & $-1^{88}, -10, 9, 89$ \\
\hline
 $0^61^{12}0^{12}1^6$ & $-1^{33},-13,23^2$ & $01^40^41^3$ & $-1^9,-5,7^2$   \\
\hline
$0^71^{14}0^{14}1^7$ & $-1^{39},-15,27^2$ & $01^{12}0^{12}1$ & $-1^{23},\pm 7,23$ \\
\hline
 $0^81^{16}0^{16}1^8$ & $-1^{45},-17,31^2$ & $01^40^41^{17}$  & $-1^{23},\pm 7,23$ \\
\hline
 $0^91^{18}0^{18}1^9$ & $-1^{51},-19,35^2$ & $01^50^51^4$ & $-1^{12},-6,9^2$   \\
\hline
 $0^{10}1^{20}0^{20}1^{10}$ & $-1^{57},-21,39^2$ & $01^60^61^{26}$ & $-1^{36},-10,11,35$   \\
\hline
\end{tabular}
\end{center}
\caption{Integral chain graphs}
\label{chain_table2}
\end{table}
\begin{corollary}
\label{cor4}
From Table \ref{chain_table2}, we see that the chain graph $G$ on $15$ vertices with binary string $01^50^51^4$ has three distinct rational eigenvalues: $-1^{12},-6,9^2$ (exponents denote the frequency of occurrence) such that its switching class does not contain a regular graph. We verify this result manually by switching $G$ with respect to all possible subset of $15$ vertices. We notice that no switching produces a regular graph but rather a bi-regular graph (of valencies $7~\& ~8$). This occurs when we apply switching with respect to $7$ vertices ($2$ from $1^5$, $3$ from $0^5$ and $2$ from $1^4$). Therefore, the existence of such a graph negates the Greave's question.
\end{corollary}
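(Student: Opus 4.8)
The plan is to dispense with the brute-force search over all $2^{14}$ switchings and replace it by a single spectral criterion. The key point is that a graph $H$ in the switching class of $G$ is $k$-regular if and only if the all-ones vector $\mathbf{1}$ is an eigenvector of its Seidel matrix $S_H$: if $A_H\mathbf{1}=k\mathbf{1}$ then $S_H\mathbf{1}=(J-I-2A_H)\mathbf{1}=(n-1-2k)\mathbf{1}$, and conversely $S_H\mathbf{1}=\mu\mathbf{1}$ forces $H$ to be regular of valency $\tfrac12(n-1-\mu)$. Since switching on a set $V'$ realises $S_H=DS_GD$ with $D$ the diagonal $\pm1$ matrix equal to $-1$ on $V'$ and $+1$ off it, and $D^2=I$, the vector $\mathbf{1}$ is an eigenvector of $S_H$ exactly when $v:=D\mathbf{1}$ is an eigenvector of $S_G$. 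Hence the switching class of $G$ contains a regular graph if and only if $S_G$ has an eigenvector all of whose entries are $\pm1$ whose eigenvalue $\mu$ also makes $\tfrac12(n-1-\mu)$ a nonnegative integer. The task thus reduces to testing, eigenspace by eigenspace, for such a $\pm1$ eigenvector.

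First I would run the parity test on the three eigenvalues $\mu\in\{-1,-6,9\}$ of $S_G$ with $n=15$. For $\mu=-1$ the putative valency is $\tfrac{15}{2}$ and for $\mu=9$ it is $\tfrac{5}{2}$; neither is an integer, so no $\pm1$ eigenvector lying in the $12$-dimensional $(-1)$-eigenspace or in the $2$-dimensional $9$-eigenspace can yield a regular graph, whatever its sign pattern. This removes both large eigenspaces at once and leaves a single candidate, the simple eigenvalue $\mu=-6$, which does give the integral valency $\tfrac12(15-1+6)=10$.

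Everything therefore reduces to the $(-6)$-eigenvector, and this is the step I expect to be the main obstacle. Because $-6$ is simple, its eigenvector is unique up to scaling; moreover it is a quotient eigenvalue (the spectrum of $Q_\pi$ must be $-1,-6,9,9$, the unique size-$4$ submultiset of $\{-1^{12},-6,9^2\}$ whose sum equals the trace $0+4+4+3=11$), so this eigenvector is constant on the four cells of $\pi$ of sizes $1,5,5,4$. The plan is to solve $(Q_\pi+6I)x=0$ explicitly and then to decide whether the resulting $4$-tuple of cell-values can be rescaled to $\pm1$; the cleanest route to non-existence would be to exhibit two cells whose entries have a ratio other than $\pm1$, for then the $(-6)$-eigenspace contains no $\pm1$ vector and, together with the parity elimination, no switching of $G$ could be regular. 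I would treat this final check as genuinely decisive rather than routine: the entire statement stands or falls on whether the four cell-values are all equal in absolute value, so the eigenvector must be computed and inspected with care before the conclusion is drawn — especially since the author's own observation that switching on a $7$-set yields only a bi-regular $(7,8)$ graph does not, by itself, rule out a regular graph arising from a switching set of a different size (here a $9$-set).
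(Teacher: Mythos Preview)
Your spectral reduction is sound and far cleaner than the paper's brute-force claim: the equivalence between ``the switching class contains a regular graph'' and ``$S_G$ has a $\pm1$ eigenvector with eigenvalue of the right parity'' is exactly the right tool, and your parity elimination of $\mu=-1$ and $\mu=9$ is correct (with $n=15$ odd, any $\pm1$ eigenvector must have even eigenvalue). Everything does indeed hinge on the simple eigenvalue $-6$.

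The problem is that when you actually carry out the decisive computation you flag at the end, it goes the wrong way. With $s_1=1$, $t_1=5$, $s_2=5$, $t_2=4$ the quotient matrix is
\[
Q_\pi=\begin{bmatrix}
0&-5&5&-4\\
-1&4&5&4\\
1&5&4&-4\\
-1&5&-5&3
\end{bmatrix},
\]
and one checks directly that $Q_\pi(1,1,-1,-1)^T=-6\,(1,1,-1,-1)^T$. The lift to $S_G$ is therefore a genuine $\pm1$ eigenvector, constant $+1$ on $V_{s_1}\cup V_{t_1}$ (six vertices) and $-1$ on $V_{s_2}\cup V_{t_2}$ (nine vertices). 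Switching on the latter set flips the cross-edges between $\{V_{s_1},V_{t_1}\}$ and $\{V_{s_2},V_{t_2}\}$ and produces a graph in which $\{V_{s_1}\cup V_{t_2}\}$, $V_{t_1}$, $V_{s_2}$ are three pairwise completely joined independent $5$-sets; that is, the switching class of $G$ contains $K_{5,5,5}$, which is $10$-regular, exactly matching $\tfrac12(15-1+6)=10$.

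So your caution in the last paragraph is well placed: the $9$-set switching you worried about is precisely the one that works, and your method does not prove the corollary --- it refutes it. The paper's asserted manual verification over all $2^{14}$ switchings must be in error, since $K_{5,5,5}$ sits in the switching class of $01^50^51^4$.
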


\section{Acknowledgments}
The  research of Santanu Mandal is supported by the University Grants Commission of India under the beneficiary code BININ01569755. 
\section{Statements and Declarations} \textbf{Competing Interests:} The author made no mention of any potential conflicts of interest.

\end{document}